\documentclass[12pt]{article}
\usepackage{amsmath}
\usepackage{amssymb}
\usepackage{stmaryrd}
\usepackage{amsthm}

% use this command to create hyperlinks (optional and recommended)
\usepackage[colorlinks=true,citecolor=black,linkcolor=black,urlcolor=blue]{hyperref}

% use these commands for typesetting doi and arXiv references in the bibliography

% all overfull boxes must be fixed; 
% i.e. there must be no text protruding into the margins

%%%%%%%%%%%%%%%%%%%%%%%%%%%%%%%%%%%%%%%%%%%%%%%%%%%%%%%%%%%%%%%%%%%%%
%  PSTricks-Packages:                                               %
%%%%%%%%%%%%%%%%%%%%%%%%%%%%%%%%%%%%%%%%%%%%%%%%%%%%%%%%%%%%%%%%%%%%%
% Note: graphicx/color _after_ pstricks
\usepackage[dvipsnames]{pstricks} % Use the 68 standard colors known to dvips!
\usepackage{multido}

%\usepackage{pst-node}   % for nodes
% \usepackage[usenames]{pstcol}     % for 2e colors
%\usepackage{pst-grad}   % for fillstyle=gradient
%\usepackage{pst-char}   % for pscharpath
%\usepackage{pst-plot}   % for fileplot

% Note: graphicx/color _after_ pstricks

\usepackage{graphicx}
\usepackage{xcolor}

%---------------------------------------------------------------

%----------- commands for pstricks grid-drawing ----------------
\newcommand{\showgrid}{}

% over-ride if \gridon
\newcommand{\gridon}{\renewcommand{\showgrid}{\psset{subgriddiv=1,griddots=10,gridlabels=6pt}\psgrid}}

\gridon % enable grid if uncommented

% put after each \begin{pspicture}
% \showgrid   % for use with \gridon macro - draws 1cm grid
%---------------------------------------------------------------

\newgray{hellgrau}{.89}

% Nicht notwendig: dvipsnames erledigt das schon!
% Siehe https://en.wikibooks.org/wiki/LaTeX/Colors
%\definecolor{Apricot}{cmyk}{0,0.32,0.52,0}
%\definecolor{Tan}{cmyk}{0.14,0.42,0.56,0}
%\definecolor{Mahogany}{cmyk}{0,0.85,0.87,0.35}

%%%%%%%%%%%%%%%%%%%%%%%%%%%%%%%%%%%%%%%%%%%%%%%%%%%%%%%%%%%%%%%%%%%%%
%  Mac-spezifisches:                                                %
%%%%%%%%%%%%%%%%%%%%%%%%%%%%%%%%%%%%%%%%%%%%%%%%%%%%%%%%%%%%%%%%%%%%%
\usepackage[applemac]{inputenc}

\usepackage{algorithmic}

%%%%%%%%%%%%%%%%%%%%%%%%%%%%%%%%%%%%%%%%%%%%%%%%%%%%%%%%%%%%%%%%%%%%%
%  Konfigurationsmakros: Sollte man "immer" einlesen ...            %
%                       2007-07-12                                  %
%%%%%%%%%%%%%%%%%%%%%%%%%%%%%%%%%%%%%%%%%%%%%%%%%%%%%%%%%%%%%%%%%%%%%
% Deutsch oder Englisch?
\newif\ifenglish

\englishtrue

% Documentenvariante (z.B.: Slides, Übungsskriptum, ...)
\newif\ifvariant
\variantfalse

%%%%%%%%%%%%%%%%%%%%%%%%%%%%%%%%%%%%%%%%%%%%%%%%%%%%%%%%%%%%%%%%%%%%%
% Sehr allgemein verwendbare Makros:                                %
%%%%%%%%%%%%%%%%%%%%%%%%%%%%%%%%%%%%%%%%%%%%%%%%%%%%%%%%%%%%%%%%%%%%%

%%%%%%%%%%%%%%%%%%%%%%%%%%%%%%%%%%%%%%%%%%%%%%%%%%%%%%%%%%%%%%%%%%%%%
% Sehr allgemeine Abkürzungen:                                      %
%%%%%%%%%%%%%%%%%%%%%%%%%%%%%%%%%%%%%%%%%%%%%%%%%%%%%%%%%%%%%%%%%%%%%
\def\bit{\begin{itemize}}
\def\eit{\end{itemize}}
\def\beq{\begin{equation}}
\def\eeq{\end{equation}}

\englishtrue
%%%%%%%%%%%%%%%%%%%%%%%%%%%%%%%%%%%%%%%%%%%%%%%%%%%%%%%%%%%%%%%%%%%%%%%%
%                    FILE GENERATED BY DR. MABUSE:                     %
%                             2019-10-17.                              %
%%%%%%%%%%%%%%%%%%%%%%%%%%%%%%%%%%%%%%%%%%%%%%%%%%%%%%%%%%%%%%%%%%%%%%%%

%%%%%%%%%%%%%%%%%%%%%%%%%%%%%%%%%%%%%%%%%%%%%%%%%%%%%%%%%%%%%%%%%%%%%%%%
%                               MAKROS:                                %
%                    Klammerungen und dergleichen.                     %
%%%%%%%%%%%%%%%%%%%%%%%%%%%%%%%%%%%%%%%%%%%%%%%%%%%%%%%%%%%%%%%%%%%%%%%%
\def\of#1{\left(#1\right)} % Klammerung von Argumenten einer Funktion
\def\pas#1{\left(#1\right)} % Klammerung mit normalen Klammern
\def\brk#1{\left[#1\right]} % Klammerung mit eckigen Klammern
 % Klammerung mit winkeligen Klammern
 % Klammerung mit geschwungenen Klammern

 % Makro f\"ur kleine Hinweise zu Umformungen in l\"angeren Rechnungen

 % Links offenes Intervall
 % Rechts offenes Intervall

\def\defeq{\stackrel{\text{\tiny def}}{=}}

%%%%%%%%%%%%%%%%%%%%%%%%%%%%%%%%%%%%%%%%%%%%%%%%%%%%%%%%%%%%%%%%%%%%%%%%
%                               MAKROS:                                %
%                        Mengen und Funktionen.                        %
%%%%%%%%%%%%%%%%%%%%%%%%%%%%%%%%%%%%%%%%%%%%%%%%%%%%%%%%%%%%%%%%%%%%%%%%

 % Mengenoperator: "F\"ur die gilt''

%%%%%%%%%%%%%%%%%%%%%%%%%%%%%%%%%%%%%%%%%%%%%%%%%%%%%%%%%%%%%%%%%%%%%%%%
%                               MAKROS:                                %
%        Mengen, algebraischen Strukture und spezielle Zahlen.         %
%%%%%%%%%%%%%%%%%%%%%%%%%%%%%%%%%%%%%%%%%%%%%%%%%%%%%%%%%%%%%%%%%%%%%%%%

\def\N{{\mathbb N}}

 % imaginaere Einheit
 % Leibniz-d
 % Eulersche Zahl
\def\0{{\mathbf 0}} % Nullelement in algebraischer Struktur
\def\1{{\mathbf 1}} % Einselement in algebraischer Struktur

\def\floor#1{\left\lfloor #1\right\rfloor}

\def\ceil#1{\left\lceil #1\right\rceil}

\def\CT1{CT1}
\def\xxxCT1{CT1}

\newtheorem{thm}{\ifenglish Theorem\else Satz\fi}[section] % "Master-Theorem" für Numerierung

% Hmm: Is this needed?

\newtheoremstyle{excstyle}% name of the style to be used
  {1em}% measure of space to leave above the theorem. E.g.: 3pt
  {2pt}% measure of space to leave below the theorem. E.g.: 3pt
  {\sffamily\footnotesize\slshape}% name of font to use in the body of the theorem
  {0pt}% measure of space to indent
  {\sffamily\footnotesize\bfseries}% name of head font
  {:}% punctuation between head and body
  { }% space after theorem head
  {}% Manually specify head
\theoremstyle{excstyle}

\def\figref#1{\ifenglish Figure\else Abbildung\fi~\ref{#1}}

%%%%%%%%%%%%%%%%%%%%%%%%%%%%%%%%%%%%%%%%%%%%%%%%%%%%%%%%%%%%%%%%%%%%%
% Abkürzungen:                                                      %
%%%%%%%%%%%%%%%%%%%%%%%%%%%%%%%%%%%%%%%%%%%%%%%%%%%%%%%%%%%%%%%%%%%%%

\ifenglish\else\fi

%%%%%%%%%%%%%%%%%%%%%%%%%%%%%%%%%%%%%%%%%%%%%%%%%%%%%%%%%%%%%%%%%%%%%
%  Quick- and dirty hacks:                                          %
%%%%%%%%%%%%%%%%%%%%%%%%%%%%%%%%%%%%%%%%%%%%%%%%%%%%%%%%%%%%%%%%%%%%%

%\def\bit{\begin{itemize}}
%\def\eit{\end{itemize}}
%
%\def\EM#1{{\em #1\/}}

\def\figref#1{Fi\-gu\-re~\ref{#1}}
\def\defeq{:=}

\def\shift{{\mathbf E}}
\def\idop{{\mathbf I}}
\def\detdim#1{{\det_{#1}}}

%% pstricks colours:
\newgray{mfgray85}{0.85}

%% pstricks colours:
\newgray{mfgray70}{0.70}

%% pstricks colours:
\newgray{mfgray60}{0.60}

\title{Enumeration of symmetric Gelfand--Tsetlin patterns by linear algebra}

\author{Markus Fulmek\thanks{
Research supported by the National Research Network ``Analytic
Combinatorics and Probabilistic Number Theory'', funded by the
Austrian Science Foundation. 
}\\
\small Fakult\"at f\"ur Mathematik \\
\small Oskar-Morgenstern-Platz 1, A-1090 Wien, Austria \\
\small \tt Markus.Fulmek@Univie.Ac.At
}

%\date{2018\\ \small Mathematics Subject Classifications: 05A??}

\bibliographystyle{plain}

\def\secA{\section}

\def\EM#1{{\em #1\/}}
\begin{document}
% "Normale" Bibliographie
\bibliographystyle{plain}
% Titelseite und Vorwort

\maketitle

\begin{abstract}
We shall present a ``linear algebraic'' proof (involving some calculations in the algebra
of linear operators on a vector space of polynomials and some manipulations of determinants)
of the formula for the enumeration of symmetric Gelfand--Tsetlin patterns
with fixed bottom row, which was proved by Tri Lai in the context of enumerating symmetric
lozenge tilings of a ``halved'' hexagon with ``dents''.
\end{abstract}

\secA{Gelfand--Tsetlin patterns}
A \EM{Gelfand--Tsetlin pattern} is a (finite) triangular array of natural numbers
$$
\begin{matrix}
 &  &  &  & u_{1,1} &  &  &  & \\
 &  &  & u_{2,1} & & u_{2,2} &  &  & \\
 &  & u_{3,1} & & u_{3,2} & & u_{3,3} &  & \\
 & u_{4,1} & & u_{4,2} & & u_{4,3} & & u_{4,4} & \\
\dots & & \dots & & \dots & & \dots & & \dots\\
\end{matrix}
$$
where the entries % corresponding to 
in row $i-1$ are in the following sense
``interlaced'' with the entries % corresponding to 
in row $i$:
\begin{equation}
\label{eq:interlacing}
u_{i,1}\leq u_{i-1,1}<u_{i,2}\leq u_{i-1,2} < u_{i,3}\leq\cdots u_{i-1,i-1}<u_{i,i}.
\end{equation}
For instance, the following array is a Gelfand--Tsetlin pattern with $5$ rows:
$$
\begin{matrix}
 &  &  &  & 9 &  &  &  & \\
 &  &  & 8 & & 10 &  &  & \\
 &  & 6 & & 9 & & 11 &  & \\
 & 4 & & 8 & & 10 & & 13 & \\
2 & & 7 & & 10 & & 11 & & 17\\
\end{matrix}
$$
The enumeration of Gelfand--Tsetlin patterns with fixed bottom row $\pas{u_{k_1},\dots,u_{k,k}}$
is given by the simple product formula (see \cite[Proposition 2.1]{CohnLarsenPropp:1998:TSOATBPP},
where a very concise and elegant proof is given)
$$
\prod_{1\leq i<j\leq k}\frac{u_j-u_i}{j-i}.
$$

\section{Symmetric Gelfand--Tsetlin patterns}
\def\GTP#1#2{\brk{{#1}_{i,j}}_{1,1}^{#2,i}}
\def\halfGTP#1#2{\brk{{#1}_{i,j}}_{1,1}^{#2,\floor{i/2}}}
Let us call a Gelfand--Tsetlin pattern $U=\GTP{u}{n}$
with $n$ rows (counted from the top) \EM{symmetric} if ``the entries are symmetric with
respect to the vertical central axis'' (see the left picture in \figref{fig:encoding-symmetric}), i.e., if
$$
u_{i,j} = \pas{2 u_{1,1} - 1} + i - u_{i,i-j+1} \text{ for all } j=1,2,\dots,i
$$
holds for \EM{all} rows $i$ of $U$ (note that this condition \EM{always} holds for $i=1$).
For a symmetric Gelfand--Tsetlin pattern $U$,
the middle entries are
uniquely determined\footnote{Here,
$\ceil z$ and $\floor z$ denote the unique integers such that $\ceil z - 1 < z \leq \ceil z$
and $\floor z \leq z < \floor z+1$, respectively.} in all \EM{odd} rows $i$
$$
u_{i,\ceil{i/2}}=u_{1,1}+\floor{\frac{i-1}2},
$$
while in all \EM{even} rows $i$ we must have
$$
u_{i,\ceil{i/2}+1} > u_{1,1}+\floor{\frac{i-1}2}.
$$
So a symmetric Gelfand--Tsetlin pattern $U$ is uniquely determined by the entry $u_{1,1}$ and the
entries corresponding to the ``right half'' (see the right picture in \figref{fig:encoding-symmetric}):
$$
u_{i,\ceil{i/2}+1},u_{i,\ceil{i/2}+2},\dots,u_{i,i}\text{ for } i=2,3,\dots n.
$$
For convenience, we shall ``shift and reverse'' the entries corresponding to the  ``right half'', i.e., we change the notation as follows:
\begin{equation}
\label{eq:change-notation}
x_{i,j} \defeq u_{i,\ceil{i/2}+1-j} - 
\pas{u_{1,1}+\floor{\frac{i-1}2}}
\text{ for }i=2,3,\dots n \text{ and }j=1,2,\dots,\floor{\frac i2},
\end{equation}
and observe that the \EM{enumeration} of symmetric Gelfand--Tsetlin patterns with fixed bottom row
$$\pas{u_{n,1},u_{n,2},\dots,u_{n,n}}$$
does not depend on $u_{1,1}$, but amounts
to the enumeration of arrays $\halfGTP{x}{n}$ with fixed bottom row
$$\pas{x_{n,1},x_{n,2},\dots,x_{n,\floor{\frac n2}}}$$
where rows $i$ are ``interlaced'' as follows:
\begin{align}
0 < x_{i-1,\floor{i/2}} \leq x_{i,\floor{i/2}} < x_{i-1,\floor{i/2}-1} \leq x_{i,\floor{i/2}-1} < \dots < x_{i-1,1} \leq x_{i,1} &\text{ for odd } i,\label{eq:halfGTP-interlacing-odd}\\ 
x_{i,\floor{i/2}} \leq x_{i-1,\floor{i/2}-1} < x_{i,\floor{i/2}-1} \leq x_{i-1,\floor{i/2}-2} < \dots \leq x_{i-1,1} < x_{i,1}&\text{ for even } i.\label{eq:halfGTP-interlacing-even}
\end{align}
We shall call such array $\halfGTP{x}{n}$ a \EM{halved Gelfand--Tsetlin pattern}; note that its
first row is always \EM{empty}.
\begin{figure}
\setcounter{MaxMatrixCols}{20}
{\small
$$
\begin{matrix} 
 &  &  &  &  &  &  & {\gray 4} &  &  &  &  &  &  & \\
 &  &  &  &  &  & {\gray 3} & & 6 &  &  &  &  &  & \\
 &  &  &  &  & {\gray 2} & & {\gray 5} & & 8 &  &  &  &  & \\
 &  &  &  & {\gray 2} & & {\gray 4} & & 7 & & 9 &  &  &  & \\
 &  &  & {\gray 1} & & {\gray 3} & & {\gray 6} & & 9 & & 11 &  &  & \\
 &  & {\gray 1} & & {\gray 3} & & {\gray 5} & & 8 & & 10 & & 12 &  & \\
 & {\gray 1} & & {\gray 3} & & {\gray 4} & & {\gray 7} & & 10 & & 11 & & 13 & \\
{\gray 1} & & {\gray 2} & & {\gray 4} & & {\gray 7} & & 8 & & 11 & & 13 & & 14\\
\end{matrix}
\quad\quad
\quad\quad
\begin{matrix}
 {\gray 0} &  &  &  &  &  &  & \\
  & 2 &  &  &  &  &  & \\
 {\gray 0} & & 3 &  &  &  &  & \\
 & 2 & & 4 &  &  &  & \\
 {\gray 0} & & 3 & & 5 &  &  & \\
  & 2 & & 4 & & 6 &  & \\
 {\gray 0} & & 3 & & 4 & & 6 & \\
 & 1 & & 4 & & 6 & & 7\\
\end{matrix}
$$
The symmetric Gelfand--Tsetlin pattern with $8$ rows shown to the left is ``encoded'' by
the single entry in the first row ($4$, which uniquely determines the \EM{central} entries in all odd rows) and its shifted ``right half'' (shown to the right), where
we introduced starting entries $0$ in odd rows just to make clear the connection between the arrays:
The array to the right is the ``right half'' of the array to the left, where we subtracted
\begin{itemize}
\item the row's central entry (for odd rows), % or 
\item the central entry of the row above (for even rows)
\end{itemize}
from the entries. The change of notation  (see \eqref{eq:change-notation})
amounts to reading these entries from the right;
i.e., the bottom row of the ``right half'' is denoted as $\pas{7,6,4,1}$.
}
\caption{Illustration: Encoding of symmetric Gelfand--Tsetlin patterns.}
\label{fig:encoding-symmetric}
\end{figure}

\section{Enumeration of symmetric Gelfand--Tsetlin patterns with fixed bottom row}
Let us denote the number of halved Gelfand--Tsetlin patterns with fixed bottom row
$\pas{x_1,x_2,\dots,x_k}$
\def\evenrow{E}
\def\oddrow{O}
\bit
\item by
$\oddrow\of{x_1,x_2,\dots,x_k}$ if the number of rows is $2k+1$,
\item and by
$\evenrow\of{x_1,x_2,\dots,x_k}$ if the number of rows is $2k$.
\eit
Denoting the empty row ($k=0$) by ``$-$'', we clearly have
$$
\oddrow\of{-} = \evenrow\of{x_1} = 1.
$$
From \eqref{eq:halfGTP-interlacing-odd} and \eqref{eq:halfGTP-interlacing-even} we
immediately obtain the following \EM{summation recursions}:
\begin{align}
\oddrow\of{x_1,x_2,\dots,x_k} &=
\sum_{y_k=1}^{x_k}\sum_{y_{k-1}=x_k+1}^{x_{k-1}}\cdots\sum_{y_1=x_2+1}^{x_1}
\evenrow\of{y_1,y_2,\cdots,y_k},\label{eq:odd-recursion}\\
\evenrow\of{x_1,x_2,\dots,x_k} &= 
\sum_{y_{k-1}=x_k}^{x_{k-1}-1}\sum_{y_{k-2}=x_{k-1}}^{x_{k-2}-1}\cdots\sum_{y_1=x_2}^{x_1-1}
\oddrow\of{y_1,y_2,\cdots,y_{k-1}}.\label{eq:even-recursion}
\end{align}
From these observations we see that %: %it follows immediately that
%\bit
%\item 
$\evenrow\of{x_1,x_2,\dots,x_k}$ and $\oddrow\of{x_1,x_2,\dots,x_k}$ (viewed as functions
of the variables) are actually 
\EM{polynomials} in $x_1,x_2,\dots,x_k$.
% \item d
% \eit
By direct computation we get the first instances
\begin{align*}
\oddrow\of{x_1} &= x_1,\\
\evenrow\of{x_1,x_2} &= \frac12\pas{x_1-x_2}\pas{x_2+x_1-1}, \\
\oddrow\of{x_1,x_2} &= \frac16x_1 x_2\pas{x_1-x_2}\pas{x_2+x_1}.
\end{align*}
After working out some more instances it is not hard to guess the factorization for these polynomials
(these factorizations are given in \cite[equations (3.1) and (3.2) of Theorem 3.1.]{Lai:2014:EOTOQAR}, where they are proved in the context of symmetric lozenge tilings
of a ``halved'' hexagon with ``dents''; i.e., missing triangles):

\begin{thm}
Let $\mathbf x\defeq\pas{x_1,x_2,\dots}$ be an infinite series of variables. Define
the polynomials
\begin{align}
o_k\of{\mathbf x} &\defeq
\frac{
\prod_{j=1}^kx_j\prod_{i=1}^{j-1}\pas{x_i-x_j}\pas{x_i+x_j}
}{
1!\cdot3!\cdots\pas{2k-1}!
},\label{eq:odd-formula}\\
e_k\of{\mathbf x} &\defeq
\frac{
\prod_{j=1}^k\prod_{i=1}^{j-1}\pas{x_i-x_j}\pas{x_i+x_j-1}
}{
0!\cdot2!\cdots\pas{2k-2}!
}. \label{eq:even-formula}
\end{align}
Then we have
$\oddrow\of{x_1,x_2,\dots,x_k} = o_k\of{\mathbf x}$ and %,\\ %\label{eq:odd-formula}\\
$\evenrow\of{x_1,x_2,\dots,x_k} = e_k\of{\mathbf x}$.
\end{thm}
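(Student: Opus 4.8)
The plan is to prove the two identities simultaneously by induction on $k$, using the summation recursions \eqref{eq:odd-recursion} and \eqref{eq:even-recursion} together with the base cases $\oddrow\of{-}=\evenrow\of{x_1}=1$. The core of the argument will be to verify that the closed-form polynomials $o_k$ and $e_k$ satisfy the very same recursions. Because these recursions are \emph{nested summations} (telescoping sums of polynomial values over intervals), the first task is to recast summation as an operator identity. I would introduce, on the space of polynomials in one variable, the forward-shift operator $\shift$ (sending $f(t)\mapsto f(t+1)$) and note that a summation $\sum_{y=a}^{b}f(y)$ is governed by the antidifference operator $\pas{\shift-\idop}^{-1}$, so that $\sum_{y=a}^{b}f(y)=\pas{Ff}(b+1)-\pas{Ff}(a)$ for a suitable $F$. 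The point of working in the operator algebra is that the nested sums in \eqref{eq:odd-recursion} and \eqref{eq:even-recursion} factor into a product of commuting one-variable summation operators, one per summation variable.

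Next I would set up the determinantal machinery. The products $\prod_{i<j}\pas{x_i-x_j}$ appearing in $o_k$ and $e_k$ are Vandermonde-type determinants, and the additional factors $\pas{x_i+x_j}$, $\pas{x_i+x_j-1}$ together with the leading $\prod_j x_j$ strongly suggest that both $o_k$ and $e_k$ can be written as $\det$ of a matrix whose entries are \emph{single-variable} polynomials, i.e.\ $\detdim{1\le i,j\le k}\brk{p_j\of{x_i}}$ for appropriate polynomial families $\set{p_j}$. Concretely, one expects $o_k$ to be (up to the normalizing constant $1!\,3!\cdots\pas{2k-1}!$) a determinant of odd monomial-like polynomials and $e_k$ a determinant of a shifted family reflecting the $x_i+x_j-1$ factor. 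Establishing these determinant representations is a self-contained algebraic identity, provable by comparing the antisymmetry, degree, and leading behavior in each $x_i$, or by factoring out the Vandermonde directly.

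With both ingredients in place, the inductive step becomes: apply the nested summation operator (a product of one-variable antidifference operators) to the determinant representing $\evenrow$ and check that the result is the determinant representing $\oddrow$, and vice versa. The key mechanism is that applying a one-variable operator to a column (or row) of a determinant acts columnwise, so the nested summation transforms the polynomial family $\set{p_j}$ defining one determinant into the family defining the other. The bookkeeping of the summation limits — which in \eqref{eq:odd-recursion} run $y_k\in\brk{1,x_k}$, $y_{k-1}\in\brk{x_k+1,x_{k-1}}$, etc., and in \eqref{eq:even-recursion} are shifted by one — is exactly what is handled cleanly by the evaluation formula $\pas{Ff}(b+1)-\pas{Ff}(a)$ for the antidifference, and the telescoping across adjacent summation variables is what produces the correct new column entries.

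The main obstacle I anticipate is precisely this last matching step: verifying that the interlocked, offset summation bounds conspire so that the antidifference operators map the $e$-family of column polynomials onto the $o$-family and back, including getting the normalizing factorials $0!\,2!\cdots\pas{2k-2}!$ and $1!\,3!\cdots\pas{2k-1}!$ to transform correctly. The ``$+1$'' shifts distinguishing the two recursions and the distinction between the $\pas{x_i+x_j}$ and $\pas{x_i+x_j-1}$ factors must be tracked carefully; a sign error or an off-by-one in a summation limit would break the correspondence. I would therefore isolate the one-variable operator computation as a lemma — computing $\pas{\shift-\idop}^{-1}$ applied to the relevant basis polynomials in closed form — so that the multivariable determinant step reduces to applying that lemma columnwise and reading off that the two determinant families are exchanged.
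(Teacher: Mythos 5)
Your scaffolding coincides with the paper's --- the same two ingredients appear there: an algebra of commuting shift operators acting on polynomials, and Vandermonde-type determinant representations, namely $o_k = \detdim{k}\left(x_i^{2k-2j+1}/(2k-2j+1)!\right)$ and $e_k = \detdim{k}\left((x_i-\tfrac12)^{2k-2j}/(2k-2j)!\right)$. But your core mechanism runs in the \emph{opposite direction} from the paper's, and that is exactly where your proposal has a genuine gap. The paper never computes antidifferences. It instead proves the \emph{difference} equations $\prod_{i=1}^{k}(\idop-\shift_i^{-1})\,o_k = e_k$ and $\prod_{i=1}^{k-1}(\shift_i-\idop)\,e_k = o_{k-1}$; differencing the explicit closed forms is elementary, because the parity and leading term of entries like $\bigl((x+\tfrac12)^{2j}-(x-\tfrac12)^{2j}\bigr)/(2j)!$ are visible at a glance, so unitriangular column operations finish it. The summation recursions are then recovered by applying the geometric sums $\sum_j \shift_i^{j}$ to these difference equations, which telescope to $\prod_i(\shift_i^{x_i-x_{i+1}}-\idop)$, and by observing that in the $2^k$-term expansion of this product all terms but one vanish, since $o_k$ (resp.\ $e_k$) is zero whenever a variable is zero or two consecutive variables coincide. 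That vanishing argument is entirely absent from your proposal; your route, if completed, would not need it, but you should be aware it is the paper's substitute for the antidifference computation you are attempting.

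The gap itself: your plan rests on computing $(\shift-\idop)^{-1}$ on the basis polynomials ``in closed form'' and then ``reading off that the two determinant families are exchanged,'' and this reading-off is precisely the nontrivial content, which you flag as an obstacle but never resolve. Two things must actually be proven. First, after summing row $i$ of $\detdim{k}\left((y_i-\tfrac12)^{2k-2j}/(2k-2j)!\right)$ over $y_i\in[x_{i+1}+1,\,x_i]$ (and $y_k\in[1,x_k]$), the entries of row $i$ depend on \emph{both} $x_i$ and $x_{i+1}$; one must add rows $i+1,\dots,k$ to row $i$ so that each row telescopes to the single-variable form $q_j(x_i)$, where $q_j(x)\defeq\sum_{y=1}^{x}(y-\tfrac12)^{2k-2j}/(2k-2j)!$ --- your ``telescoping'' remark gestures at this but does not pin it down. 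Second, and this is the crux: you need each $q_j$ to be an \emph{odd} polynomial in $x$, for only then is the transition matrix from the target family $\{x^{2k-2j+1}/(2k-2j+1)!\}$ to $\{q_j\}$ unitriangular and the two determinants equal. Matching leading terms is not enough: a single stray even-degree term in some $q_j$ would lie outside the span of the target columns and destroy the identity. This oddness is equivalent to the Bernoulli-polynomial symmetry $B_{2m+1}(\tfrac12+x)=-B_{2m+1}(\tfrac12-x)$ (the even recursion analogously needs $B_{2m}(x)$ even in $x-\tfrac12$, plus a bordering step to pass from a $(k-1)\times(k-1)$ to a $k\times k$ determinant). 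These facts are classical and true, so your route can be completed --- but as written, the decisive lemma is missing, and it is exactly the difficulty that the paper's inversion (difference the conjectured answer rather than sum the known one) was designed to avoid.
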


\begin{proof}
We already saw that the assertion is true for $k\leq 2$; so it suffices to show that the
polynomials $o_k$ and $e_k$ obey the summation recursions \eqref{eq:odd-recursion} and
\eqref{eq:even-recursion}. We shall show this by a bit of linear algebra:
On the real vector space
of all polynomials in $\mathbf x$, we define the \EM{identity operator}
$$
\idop: p\mapsto p
$$
and the \EM{shift operators}
$$
\shift_i: p\of{x_1,x_2,\dots,x_i,x_{i+1}\dots} \mapsto p\of{x_1,x_2,\dots,x_i+1,x_{i+1}\dots},
$$
with the obvious \EM{inverses}
$$
\shift_i^{-1}: p\of{x_1,x_2,\dots,x_i,x_{i+1}\dots} \mapsto p\of{x_1,x_2,\dots,x_i-1,x_{i+1},\dots}.
$$
Clearly, the operators $\idop,\shift_1,\shift_1^{-1},\shift_2,\shift_2^{-1},\dots$ are \EM{linear}
and \EM{pairwise commutative}. % So we want to prove the following \EM{translation} of 
Translating the
summation recursions \eqref{eq:odd-recursion} and
\eqref{eq:even-recursion} into \EM{operator language}, we must prove: For every
sequence of variables $\pas{x_1,x_2,\dots,x_k,0,\dots}$ such that $x_i-x_{i+1}\in\N$
for $i=1,2,\dots,k$ there holds
\begin{align}
o_k\of{x_1,x_2,\dots,x_k,0,\dots} &=
\pas{\prod_{i=1}^k\sum_{j=1}^{x_i-x_{i+1}}\shift_i^j}
e_k\of{x_2,x_3,\cdots,x_{k},0\dots},\label{eq:odd-recursion2}\\
e_k\of{x_1,x_2,\dots,x_k,0,\dots} &= 
\pas{\prod_{i=1}^{k-1}\sum_{j=0}^{x_{i}-x_{i+1}-1}\shift_i^j}
o_{k-1}\of{x_2,x_3,\cdots,x_{k},0,\dots}.\label{eq:even-recursion2}
\end{align}
Equations \eqref{eq:odd-recursion2} and \eqref{eq:even-recursion2} can be deduced
from their ``inverse relations'', namely the \EM{difference equations} (which we shall show below)
\begin{align}
\pas{\prod_{i=1}^k\pas{\idop-\shift_i^{-1}}} o_k\of{\mathbf x} &= e_k\of{\mathbf x} \label{eq:diff-odd-even}, \\
\pas{\prod_{i=1}^{k-1}\pas{\shift_i-\idop}} e_k\of{\mathbf x} &= o_{k-1}\of{\mathbf x} \label{eq:diff-even-odd}
\end{align}
by simple computations in the operator algebra; we start with \eqref{eq:odd-recursion2}:
\begin{align}
&\phantom{=}
\pas{\prod_{i=1}^k\sum_{j=1}^{x_i-x_{i+1}}\shift_i^j}
\underbrace{
\pas{\prod_{i=1}^k\pas{\idop-\shift_i^{-1}}}
o_k\of{x_2,x_3,\cdots,x_{k},0\dots}
}_{
e_k\of{x_2,x_3,\cdots,x_{k},0\dots}\text{ by \eqref{eq:diff-odd-even}}
} \notag \\
&=
\pas{\prod_{i=1}^k\pas{\shift_i^{x_i-x_{i+1}}-\idop}}
o_k\of{x_2,x_3,\cdots,x_{k},0\dots}.\label{eq:odd-comp}
\end{align}
Now observe that $o_k$ is zero whenever among the first $k$ variables some
variable is zero \EM{or}
two (consecutive) variables are equal: So in the expansion of \eqref{eq:odd-comp},
there is a sole non--zero term, namely
$$
\pas{\prod_{i=1}^k{\shift_i^{x_i-x_{i+1}}}}
o_k\of{x_2,x_3,\cdots,x_{k},0\dots} = o_k\of{x_1,x_2,\cdots,x_{k-1},x_k,0\dots}.
$$

Similarly, we compute for \eqref{eq:even-recursion2} (note that the $k$--th variable
in $e_k$ can be chosen \EM{arbitrarily} in \eqref{eq:diff-even-odd}; we choose
it to be equal to the variable in position $k-1$):
\begin{align}
&\phantom{=}
\pas{\prod_{i=1}^{k-1}\sum_{j=0}^{x_{i}-x_{i+1}-1}\shift_i^j}
\underbrace{
\pas{\prod_{i=1}^{k-1}\pas{\shift_i-\idop}}
e_{k}\of{x_2,x_3,\cdots,x_{k},x_k,0,\dots}
}_{
o_{k-1}\of{x_2,x_3,\cdots,x_{k},0,\dots}\text{ by \eqref{eq:diff-even-odd}}
} \notag \\
&=
\pas{\prod_{i=1}^{k-1}\pas{\shift_i^{x_i-x_{i+1}}-\idop}}
e_k\of{x_2,x_3,\cdots,x_{k},x_k,0,\dots}. \label{eq:even-comp}
\end{align}
As before, there is a sole non--zero term in the expansion of \eqref{eq:even-comp}
since $e_k$ is zero whenever two (consecutive) variables among the first $k$ variables are equal, namely
$$
\pas{\prod_{i=1}^{k-1}{\shift_i^{x_i-x_{i+1}}}}
e_k\of{x_2,x_3,\cdots,x_{k},x_k,0,\dots} = e_k\of{x_1,x_2,\cdots,x_{k-1},x_k,0,\dots}.
$$

So it remains to prove \eqref{eq:diff-odd-even} and \eqref{eq:diff-even-odd}:
Let us denote by $\detdim{k}\of{a_{i,j}}$ the \EM{determinant} of some matrix
$\pas{a_{i,j}}_{\pas{i,j}=\pas{1,1}}^{\pas{k,k}}$.
Then by the well--known \EM{Vandermonde identity}% for determinants
$$
\detdim{k}\of{x_i^{j-1}}=\prod_{1\leq i<j\leq k}\pas{x_j-x_i}
$$
we obtain % (after some obvious simple manipulations):
\begin{align}
o_k\of{\mathbf x} &=
\detdim{k}\of{\frac{x_i^{2k-2j+1}}{\pas{2k-2j+1}!}},  \label{eq:odd-det}\\
e_k\of{\mathbf x} &=
\detdim{k}\of{\frac{\pas{x_i-\frac{1}{2}}^{2 k-2 j}}{\pas{2 k-2 j}!}}. \label{eq:even-det}
\end{align}
By \EM{linearity} of the identity operator and the shift operators, \eqref{eq:diff-odd-even} and
\eqref{eq:diff-even-odd} are equivalent to the following \EM{determinantal identities}, where we
took into account that
the operators in \eqref{eq:diff-even-odd} do not affect the $k$--th row in \eqref{eq:even-det}:
\begin{align}
\detdim{k}\of{\frac{x_i^{2k-2j+1}-\pas{x_i-1}^{2k-2j+1}}{\pas{2k-2j+1}!}}
&=
\detdim{k}\of{\frac{\pas{x_i-\frac{1}{2}}^{2k-2j}}{\pas{2k-2j}!}},
\label{eq:detid-odd-even} \\
\detdim{k}\of{
\left.
\underbrace{
\frac{\pas{x_i+\frac{1}{2}}^{2k-2j}-\pas{x_i-\frac{1}{2}}^{2k-2j}}%
{\pas{2k-2j}!}
}_{\text{row }i<k}\;
\right\vert
\underbrace{
\frac{\pas{x_k-\frac{1}{2}}^{2k-2j}}%
{\pas{2k-2j}!}
}_{\text{row $k$}}
}
&=
\detdim{k-1}\of{\frac{x_i^{2k-2j+1}}{\pas{2k-2j+1}!}}. \label{eq:detid-even-odd}
\end{align}
Clearly, the left--hand side of \eqref{eq:detid-even-odd} is equal to
the minor of the first $k-1$ rows and columns, so \eqref{eq:detid-even-odd} is
(by reversing the order of columns to simplify notation) equivalent to
\begin{equation}
\detdim{k-1}\of{
\frac{\pas{x_i+\frac{1}{2}}^{2 j}-\pas{x_i-\frac{1}{2}}^{2 j}}%
{\pas{2 j}!}
}
=
\detdim{k-1}\of{\frac{x_i^{2j-1}}{\pas{2j-1}!}}. \label{eq:detid-even-odd2}
\end{equation}
Substituting $x_i\to y_i+\frac12$ in \eqref{eq:detid-odd-even} (and again reversing the order of columns to simplify notation) gives the \EM{equivalent} determinantal identity
\begin{equation}
\detdim{k}\of{\frac{\pas{y_i+\frac12}^{2j-1}-\pas{y_i-\frac12}^{2j-1}}{\pas{2j-1}!}}
=
\detdim{k}\of{\frac{{y_i}^{2 j-2}}{\pas{2 j-2}!}} \label{eq:detid-odd-even-subs},
\end{equation}
and both identities \eqref{eq:detid-even-odd2} and \eqref{eq:detid-odd-even-subs} follow
from the fact that the matrices corresponding to %the determinants on 
the left--hand sides
can be transformed to the matrices corresponding to the
% determinants on the 
right--hand sides by elementary (determinant--preserving)
column operations, since the \EM{leading terms} in the expansions
of the left--hand side's entries are equal to the corresponding
entries of the right--hand sides: % are
$$
\frac{m\cdot z^{m-1}\pas{\frac12+\frac12}}{{m}!} = \frac{z^{m-1}}{\pas{m-1}!}
\text{ for } m=2j \text{ or } m=2j-1.
$$
This finishes the proof.
\end{proof}

%%%%%%%%%%%%%%%%%%%%%%%%%%%%%%%%%%%%%%%%%%%%%%%%%%%%%%%%%%%%%%%%%%%%%
%  Bibliographie-File einlesen:                                     %
%%%%%%%%%%%%%%%%%%%%%%%%%%%%%%%%%%%%%%%%%%%%%%%%%%%%%%%%%%%%%%%%%%%%%
\bibliography{/Users/mfulmek/Work/TeX/database}

\end{document}